\newtheorem{theorem}{Theorem}
\newtheorem{lemma}{Lemma}
\theoremstyle{remark}
\newtheorem{remark}[theorem]{\bf Remark}
\newcommand{\leg}[2]{\left(\frac{#1}{#2}\right)}
\begin{document}

\setcounter{page}{1}

\title[]{Two Triple binomial sum supercongruences}

\author{Tewodros~Amdeberhan  and Roberto~Tauraso}

\address{Department of Mathematics,
Tulane University, New Orleans, LA 70118, USA}
\email{tamdeber@tulane.edu}

\address{Dipartimento di Matematica, 
Universit\`a di Roma ``Tor Vergata'', 
via della Ricerca Scientifica,
00133 Roma, Italy}
\email{tauraso@mat.uniroma2.it}

\subjclass[2010]{11B65, 11A07.}

\date{\today}

\begin{abstract} In a recent article, Apagodu and Zeilberger discuss some applications of an algorithm for finding and proving congruence identities (modulo primes)
of indefinite sums of many combinatorial sequence. At the end, they propose some supercongruences as conjectures. Here we prove one of them, including a new companion enumerating abelian squares, and we leave some remarks for the others. 
\end{abstract}

\maketitle

\section{Introduction}

\noindent
In recent literature, a variety of supercongruences have been conjectured by several people, such as
Beukers \cite{B}, van Hamme \cite{H}, Rodriguez-Villegas \cite{RV}, Zudilin \cite{Z}, Chan et al.  \cite{CCS}, 
and lots more by Z.-W. Sun \cite{ZWS1}, \cite{ZWS2}.  Some of these conjectures are proved using a variety of methods, including the Gaussian hypergeometric series, the Wilf-Zeilberger method and $p$-adic analysis. 

\smallskip
\noindent
In a more recent and illuminating article, Apagodu and Zeilberger \cite{AZ} discuss applications of \emph{constant term extraction} for finding and proving congruence identities (modulo primes) of indefinite sums of many combinatorial sequence. At the end of their paper, they propose some supercongruences for multiple sums as conjectures. In this paper, we prove one of them, 
including a new companion, and we leave some remarks for the others. To be specific, we confirm the following supercongruence that appears as Conjecture 6' in \cite{AZ}.  
\begin{theorem} \label{Main1}
Let $p>2$ be a prime, and let $r$, $s$, $t$ be any positive integers, then
\begin{equation}\label{SS}
\sum_{m_1=0}^{rp-1}\sum_{m_2=0}^{sp-1}\sum_{m_3=0}^{tp-1}
\binom{m_1+m_2+m_3}{m_1,m_2,m_3}\equiv_{p^3} 
\sum_{m_1=0}^{r-1}\sum_{m_2=0}^{s-1}\sum_{m_3=0}^{t-1}
\binom{m_1+m_2+m_3}{m_1,m_2,m_3}.
\end{equation}
\end{theorem}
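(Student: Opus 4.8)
The plan is to group the summation ranges into blocks of length $p$ and to reduce the whole statement to a single block congruence. Write $T(m_1,m_2,m_3)=\binom{m_1+m_2+m_3}{m_1,m_2,m_3}$ and, for general bounds, $F(A,B,C)=\sum_{m_1=0}^{A-1}\sum_{m_2=0}^{B-1}\sum_{m_3=0}^{C-1}T(m_1,m_2,m_3)$, so that \eqref{SS} is the assertion $F(rp,sp,tp)\equiv F(r,s,t)\pmod{p^3}$. Everything rests on the rational generating function $\sum_{m_1,m_2,m_3\ge0}T(m_1,m_2,m_3)\,x^{m_1}y^{m_2}z^{m_3}=\frac{1}{1-x-y-z}$, from which one reads off $F(A,B,C)=[x^{A-1}y^{B-1}z^{C-1}]\,\frac{1}{(1-x)(1-y)(1-z)(1-x-y-z)}$.

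Next I would write $m_i=p\mu_i+\nu_i$ with $0\le\mu_i<r_i$ and $0\le\nu_i<p$, where $(r_1,r_2,r_3)=(r,s,t)$, so that $F(rp,sp,tp)=\sum_{\mu}\mathrm{Block}(\mu)$ with $\mathrm{Block}(\mu)=\sum_{\nu\in\{0,\dots,p-1\}^3}T(p\mu_1+\nu_1,p\mu_2+\nu_2,p\mu_3+\nu_3)$. Applying $\sum_{\nu=0}^{p-1}[x^{p\mu+\nu}]G(x)=[x^{p(\mu+1)-1}]\bigl(G(x)\tfrac{1-x^{p}}{1-x}\bigr)$ in each variable gives the closed form $\mathrm{Block}(\mu)=[x^{p(\mu_1+1)-1}y^{p(\mu_2+1)-1}z^{p(\mu_3+1)-1}]\,\frac{(1-x^{p})(1-y^{p})(1-z^{p})}{(1-x)(1-y)(1-z)(1-x-y-z)}$. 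Because $\sum_{\mu}\binom{\mu_1+\mu_2+\mu_3}{\mu_1,\mu_2,\mu_3}=F(r,s,t)$, the theorem reduces to the block congruence $\mathrm{Block}(\mu)\equiv\binom{\mu_1+\mu_2+\mu_3}{\mu_1,\mu_2,\mu_3}\pmod{p^3}$ for every $\mu$. This reduction is the routine part; as a sanity check, the case $r=s=t=1$ reads $F(p,p,p)\equiv1$, and indeed $F(3,3,3)=271\equiv1\pmod{27}$.

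To attack the block congruence I would expand $\frac{1}{1-x-y-z}=\sum_{n\ge0}(x+y+z)^n$, split $n=pa+b$ with $0\le b<p$, and use $(x+y+z)^{pa+b}=\bigl((x+y+z)^p\bigr)^{a}(x+y+z)^{b}$. Refining the Frobenius relation $(x+y+z)^p\equiv x^p+y^p+z^p\pmod p$ to a congruence modulo $p^3$—whose correction terms are controlled by the integers $\tfrac1p\binom{p}{i,j,k}$ for $i+j+k=p$, $i,j,k\ge1$, together with the power sums $\sum_{0<i<p}i^{-1}$ and $\sum_{0<i<p}i^{-2}$—then lets one compare $\mathrm{Block}(\mu)$ termwise with its base-$p$ analogue $\binom{\mu_1+\mu_2+\mu_3}{\mu_1,\mu_2,\mu_3}$. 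For $p\ge5$ the first- and second-order corrections are killed by Wolstenholme's congruences $\sum_{0<i<p}i^{-1}\equiv0\pmod{p^2}$ and $\sum_{0<i<p}i^{-2}\equiv0\pmod p$, which settles those primes.

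The hard part is the prime $p=3$, where Wolstenholme's congruences themselves fail. That this is a genuine difficulty is shown by the two-variable analogue $\sum_{m_1=0}^{rp-1}\sum_{m_2=0}^{sp-1}\binom{m_1+m_2}{m_1}=\binom{(r+s)p}{rp}-1$, whose block congruence sums to Ljunggren's relation $\binom{(r+s)p}{rp}\equiv\binom{r+s}{r}\pmod{p^3}$; this already fails at $p=3$, since $\binom{6}{3}=20\not\equiv2\pmod{27}$. Hence any correct proof must genuinely use all three variables, whose extra symmetry supplies the cancellation that rescues $p=3$: concretely, one must show that the ``carry'' contributions, where some digit sum $\nu_1+\nu_2+\nu_3$ exceeds $p$, vanish modulo $p^3$ in aggregate. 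Establishing this three-variable cancellation is, I expect, the main obstacle.
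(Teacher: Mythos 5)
Your reduction of \eqref{SS} to the single-block congruence $\mathrm{Block}(\mu)\equiv\binom{\mu_1+\mu_2+\mu_3}{\mu_1,\mu_2,\mu_3}\pmod{p^3}$ is sound --- by inclusion--exclusion over the eight corners of a block it is in fact \emph{equivalent} to the theorem for all $r,s,t$ --- and it is a cleaner framing than the paper's, which instead collapses the $m_3$-sum by a hockey-stick identity, blocks only in $(m_1,m_2)$, and splits each block into three pieces $A_{ij},B_{ij},C_{ij}$. But that reduction is the routine part, and everything after it in your proposal is a plan rather than a proof. The assertion that, for $p\ge 5$, the first- and second-order corrections arising from $(x+y+z)^p=x^p+y^p+z^p+pE(x,y,z)$ are ``killed by Wolstenholme's congruences'' is precisely the step carrying all of the content, and it is unsubstantiated; I also doubt it is even correctly stated. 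Because the box $[0,p)^3$ does not align with the grading of $(x+y+z)^n$, extracting the block coefficient against the kernel $\frac{(1-x^p)(1-y^p)(1-z^p)}{(1-x)(1-y)(1-z)}$ produces \emph{incomplete} binomial sums, and these generate Fermat-quotient terms, not just the Wolstenholme sums $H_{p-1}$ and $H_{p-1}(2)$. In the paper's proof this is explicit: the boundary pieces contribute $\pm 2tp^2(i+j+t+1)\binom{i+j+t}{i,j,t}q_p(2)$ (which must cancel against each other), and the interior piece produces $2^{(p-1)(i+j+1)}=(1+pq_p(2))^{i+j+1}$, whose $q_p(2)$ and $q_p^2(2)$ contributions must in turn be cancelled using identities such as $\sum_{k=1}^{p-1}\frac{2^k}{k}\sum_{j=1}^{k-1}\frac{1}{j2^j}\equiv_{p}-2q_p^2(2)$ and $\sum_{k=2}^{p-1}\frac{1}{k^2}\sum_{m=k}^{p-1}\frac{(-1)^{m-k}}{\binom{m}{k}}\equiv_{p}-2q_p(2)$. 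This Fermat-quotient layer occupies Sections 2--3 of the paper, and your outline does not engage with it at all.

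Moreover, even granting your sketch for $p\ge 5$, you explicitly concede the case $p=3$, which the theorem includes. Your observation there is sharp --- the Ljunggren-type congruence indeed fails at $p=3$ since $\binom{6}{3}=20\not\equiv 2\pmod{27}$, so any correct argument must exploit the three-variable structure; one can even note that the paper's own proof invokes Wolstenholme's theorem modulo $p^3$ and congruences stated only for $p>3$, so the delicacy you identify is real. But flagging the obstacle is not overcoming it. As it stands, the proposal establishes the theorem for no prime: not for $p\ge 5$, because the central cancellation is asserted rather than proved, and not for $p=3$, because that case is declared open.
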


\smallskip
\noindent
The paper is organized as follows. The preliminary sections, Section 2 and Section 3, target a host of results which are relevant for our present purpose. In Section 4, we supply the proof for one of our main results - Theorem \ref{Main1}. The penultimate section, Section 5, recalls the notion of \emph{abelian squares} \cite{R} and subsequently provides congruences for a triple sum of squares of multinomial terms. The final section, Section 6, concludes with a few remarks. 

\section{Preliminary results on harmonic sums}

\noindent  For $r>0$, let
${\bf s}=(s_1, \ldots, s_r)\in (\mathbb{Z^{\ast}})^r$ and let $x\in\mathbb{R}$. We define the multiple sum
$$
H_n({\bf s};x)=
\sum_{1\le k_1<\cdots<k_r\le n}
\prod_{i=1}^r\frac{x_i^{k_i}}{k_i^{|s_i|}}
\quad \mbox{with}\quad x_i = \left\{\begin{array}{lr}
        x & \mbox{if $s_i<0$, }\\
        1 & \mbox{if $s_i>0$. }
        \end{array}\right.$$
The number $l({\bf s}):=r$ is called the depth (or length) and $|{\bf s}|:=\sum_{j=1}^r|s_j|$ is the weight of the multiple sum.
By convention, these sums are zero if $n<r$. $H_n({\bf s};1)$ is the {\sl ordinary multiple harmonic sum} and in that case we will simply write $H_n({\bf s})$. We denote the {\sl ordinary harmonic sum} $H_n(1)$ by $H_n$. Some known harmonic-sum evaluations include:
\begin{align}
\label{H1}
&\sum_{k=0}^{n} \binom{n}{k}H_k=2^n\left(H_n-\sum_{k=1}^{n}\frac{1}{k2^k}\right)\;,\\
\label{H2}
&\sum_{k=0}^{n}\binom{n}{k}^2H_k=\binom{2n}{n}(2H_n-H_{2n})\;.
\end{align}
These identities \eqref{H1} and \eqref{H2} are found in  \cite[(39)]{S} and \cite[(3.125)]{G}, respectively.

\smallskip
\noindent 
Let $T(n,k)=(-1)^{n-k}\binom{n}{k}\binom{n+k}{k}$. Then, we have
\begin{align}\label{T1}
&\sum_{k=0}^nT(n,k)=1\,,\\
\label{T2}&\sum_{k=0}^nT(n,k)H_k=2H_n\,.
\end{align}
The identity \eqref{T1} is found in 
\cite[(3.150)]{G} and the identity \eqref{T2} in \cite[Theorem 2]{PR}.

\begin{lemma} The following holds:
\begin{equation} \label{T4} 
\sum_{k=0}^nT(n,k)H_{2k}=3H_n-H_{\lfloor n/2\rfloor}. 
\end{equation}
\end{lemma}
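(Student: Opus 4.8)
The plan is to peel off the even-index harmonic sum using the elementary splitting $H_{2k}=\tfrac12 H_k+O_k$, where $O_k=\sum_{i=1}^k\frac{1}{2i-1}$ is the odd harmonic sum, and then apply the companion identity \eqref{T2}. Indeed,
\[
\sum_{k=0}^nT(n,k)H_{2k}=\tfrac12\sum_{k=0}^nT(n,k)H_k+\sum_{k=0}^nT(n,k)O_k=H_n+R(n),
\]
where $R(n):=\sum_{k=0}^nT(n,k)O_k$. Since the target right-hand side is $3H_n-H_{\lfloor n/2\rfloor}$, the whole problem reduces to proving the single evaluation $R(n)=2H_n-H_{\lfloor n/2\rfloor}$.

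To compute $R(n)$ I would pass to an integral representation. Writing $O_k=\int_0^1\frac{1-x^{2k}}{1-x^2}\,dx$ and recalling the classical fact that $\sum_{k=0}^nT(n,k)x^k=\tilde P_n(x)$ is the shifted Legendre polynomial, together with $\sum_{k=0}^nT(n,k)=1$ from \eqref{T1}, gives
\[
R(n)=\int_0^1\frac{1-\tilde P_n(x^2)}{1-x^2}\,dx,
\]
the integrand being a genuine polynomial because $\tilde P_n(1)=1$. The next step is to sum over $n$: using the Legendre generating function $\sum_{n\ge0}\tilde P_n(u)z^n=\big((1+z)^2-4uz\big)^{-1/2}$ with $u=x^2$, I obtain
\[
\sum_{n\ge0}R(n)z^n=\int_0^1\frac{1}{1-x^2}\left(\frac{1}{1-z}-\frac{1}{\sqrt{(1+z)^2-4x^2z}}\right)dx.
\]

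Now the inner difference vanishes at $x=1$ and rationalizes cleanly: multiplying by the conjugate shows the bracket equals $\frac{4z(1-x^2)}{(1-z)\sqrt{(1+z)^2-4x^2z}\,(\sqrt{(1+z)^2-4x^2z}+1-z)}$, so the factor $1-x^2$ cancels and one is left with an elementary integral of the shape $\int_0^1\frac{dx}{\sqrt{a^2-b^2x^2}\,(\sqrt{a^2-b^2x^2}+c)}$ with $a=1+z$, $b=2\sqrt z$, $c=1-z$. Evaluating it (by an Euler or trigonometric substitution, choosing the branch valid for $0<z<1$ and then continuing) should yield the closed form
\[
\sum_{n\ge0}R(n)z^n=\frac{1}{1-z}\ln\frac{1+z}{1-z}.
\]
Finally I would expand the right-hand side using the standard series $\sum_{n}H_nz^n=\frac{-\ln(1-z)}{1-z}$ and $\sum_{n}H_{\lfloor n/2\rfloor}z^n=\frac{-\ln(1-z^2)}{1-z}$. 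Since $\frac{1}{1-z}\ln\frac{1+z}{1-z}=\frac{-2\ln(1-z)+\ln(1-z^2)}{1-z}$, comparing coefficients gives exactly $R(n)=2H_n-H_{\lfloor n/2\rfloor}$, and combining with the first paragraph yields \eqref{T4}.

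The main obstacle is the closed-form evaluation of the rationalized integral: although elementary, it mixes inverse-trigonometric and logarithmic primitives, so care is needed to collapse them into the single logarithm $\ln\frac{1+z}{1-z}$ and to fix the correct branch for $|z|<1$. The floor function in the answer is not an extra difficulty but a direct consequence of the factor $\ln(1-z^2)=\ln(1-z)+\ln(1+z)$: the even argument $z^2$ is precisely what encodes the halving $\lfloor n/2\rfloor$.
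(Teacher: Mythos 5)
Your proof is correct, and its core is genuinely different from the paper's. Both arguments actually begin the same way: the paper also splits $H_{2k}=\sum_{j=1}^k\frac{1}{2j-1}+\frac12 H_k$ and invokes \eqref{T1} and \eqref{T2}, so in both cases the problem reduces to evaluating $R(n)=\sum_{k=0}^n T(n,k)\sum_{j=1}^k\frac{1}{2j-1}$. The divergence is in how this piece is computed. The paper rewrites $R(n)$ by Abel summation in terms of the partial sums $\sum_{k=0}^j T(n,k)$ and then proves the needed evaluation \eqref{T4b} by induction on $n$, certified by explicit WZ-pairs (including the parity identity \eqref{T4a}); this is elementary, finitary, and in the mechanical-proof spirit of the paper. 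You instead recognize $\sum_k T(n,k)x^k$ as the shifted Legendre polynomial $P_n(2x-1)$, represent the odd harmonic sum by $\int_0^1\frac{1-x^{2k}}{1-x^2}\,dx$, and evaluate the generating function $\sum_n R(n)z^n$ in closed form; this handles all $n$ at once with no induction, at the price of an analytic computation. The one step you leave as a sketch, the rationalized integral, does work out: with $a=1+z$, $b=2\sqrt z$, $c=1-z$ one has $a^2-b^2=c^2$, and the substitution $x=\frac{a}{b}\sin\theta$ followed by $t=\tan(\theta/2)$ turns $\int_0^1\frac{dx}{\sqrt{a^2-b^2x^2}\,(\sqrt{a^2-b^2x^2}+c)}$ into $\frac1b\int\frac{d\theta}{a\cos\theta+c}=\frac{1}{b^2}\ln\frac{1+\sqrt z\,t}{1-\sqrt z\,t}$ evaluated between $t=0$ and $t=\tan(\theta_1/2)=\frac{b}{a+c}=\sqrt z$, i.e.\ $\frac{1}{4z}\ln\frac{1+z}{1-z}$; multiplying by the prefactor $\frac{4z}{1-z}$ gives exactly your claimed $\frac{1}{1-z}\ln\frac{1+z}{1-z}$. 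The remaining steps (interchange of sum and integral for $|z|<1$, and coefficient extraction against $\sum_n H_nz^n$ and $\sum_n H_{\lfloor n/2\rfloor}z^n$) are routine, so your argument is complete once this computation is written out.
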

\begin{proof} Let $F(n,k)=\frac{(-1)^{n-k}}{2k+1}
\binom{n-1}k\binom{n+k}kn$ and $\delta_E(n)$ be the \emph{indicator function} of $E\subset\mathbb{Z}$. We first show that
\begin{equation}\label{T4a} 
\sum_{k=0}^{n-1}F(n,k)=-\delta_{odd}(n),
\end{equation}
We follow the Wilf-Zeilberger method. Define $W(n,k)=2(-1)^{n-k-1}\binom{n}{k-1}\binom{n+k}{k-1}$ and then (routinely) check that $F(n+2,k)-F(n,k)=W(n,k+1)-W(n,k)$. Summing over all integers $k$ and telescoping, we get $\sum_{k=0}^{n+1}F(n+2,k)=\sum_{k=0}^{n-1}F(n,k)$. For initial conditions, compute at $n=1$ and $n=2$. This settles the argument on this parity result (\ref{T4a}).

\smallskip
\noindent Now we prove, by induction on $n$, that
\begin{equation}\label{T4b} 
\sum_{j=0}^{n-1}\frac1{2j+1}\sum_{k=0}^jT(n,k)=H_{2n}-
\frac{5}{2}H_n+H_{\lfloor n/2\rfloor}.
\end{equation}
The case $n=1$ checks $-1=-1$. Assume the statement holds for $n-1$. Define $G(n,k)=2(-1)^{n-k}\binom{n}{k-1}\binom{n+k}{k-1}$ and verify that
$T(n,k)-T(n-1,k)=G(n-1,k+1)-G(n-1,k)$, also the summation $\sum_{k=0}^jT(n,k)=\sum_{k=0}^jT(n-1,k)+G(n-1,j+1)$. 
Based on \eqref{T1}, the induction assumption and the identity (\ref{T4a}) from above, we gather
\begin{align*}
\sum_{j=0}^{n-1}\sum_{k=0}^j\frac{T(n,k)}{2j+1}&=\sum_{a=0}^{n-1}\sum_{k=0}^j\frac{T(n-1,k)}{2j+1}
+2\sum_{j=0}^{n-1}\frac{(-1)^{n-j}}{2j+1}\binom{n-1}j\binom{n+j}j \\
&=H_{2n-2}-\frac{5H_{n-1}}2+H_{\lfloor(n-1)/2\rfloor}+\sum_{k=0}^{n-1}\frac{T(n-1,k)}{2n-1}-\frac{2\delta_{odd}(n)}n \\
&=H_{2n-2}-\frac{5H_{n-1}}2+H_{\lfloor(n-1)/2\rfloor}+\frac1{2n-1}-\frac{2\delta_{odd}(n)}n \\
&=H_{2n}-\frac52H_n+H_{\lfloor(n-1)/2\rfloor}+\frac{2\delta_{even}(n)}n
=H_{2n}-\frac{5}{2}H_n+H_{\lfloor n/2\rfloor}. \end{align*}
By induction, the claim (\ref{T4b}) holds true.

\smallskip
\noindent
To prove \eqref{T4}: employ $H_{2k}=\sum_{j=1}^k\frac1{2j-1}+\frac12H_k$ twice, Abel's summation by parts, equations (\ref{T2}) and (\ref{T1}), again $\sum_{j=1}^n\frac1{2j-1}=H_{2n}-\frac12H_n$, and then equation (\ref{T4b}). The outcome is 
\begin{align*} 
\sum_{k=0}^nT(n,k)H_{2k}&=
\sum_{k=0}^nT(n,k)\sum_{j=1}^k\frac1{2j-1}
+\frac{1}{2}\sum_{k=0}^nT(n,k)H_{k}\\
&=\sum_{k=0}^nT(n,k)\sum_{j=1}^n\frac1{2j-1}
-\sum_{j=0}^{n-1}\frac1{2j+1}\sum_{k=0}^jT(n,k)+H_n \\
&=H_{2n}-\frac12H_n-H_{2n}+\frac52H_n-H_{\lfloor n/2\rfloor}+H_n  \\
&=3H_n-H_{\lfloor n/2\rfloor}. \end{align*}
The proof is now complete.
\end{proof}

\noindent
Let $q_p(2)=(2^{p-1}-1)/p$ and $p>3$ a prime. These are known (see \cite[Sections 1 and 7]{TZ}): 
\begin{align}
&H_{p-1}\equiv_{p^2} 0\;,\label{C1}\\
&H_{p-1}(2)\equiv_{p} 0\;,\label{C2}\\
&H_{p-1}(1,1)\equiv_{p} 0\;,\label{C3}\\
&H_{p-1}(-1;2)\equiv_{p^2} -2q_p(2)\;,\label{C4}\\
&H_{p-1}(-1;1/2)\equiv_{p} q_p(2)\;,\label{C44}\\
&H_{p-1}(-2;-1)\equiv_{p} 0\;,\label{C66}\\
&H_{p-1}(-2;2)\equiv_{p} -q^2_p(2)\;,\label{C10}\\
&H_{p-1}(1,-1;-1)\equiv_{p} q^2_p(2)\;,\label{C55}\\
&H_{p-1}(1,-1;2)\equiv_{p} 0\;,\label{C5}\\
&H_{p-1}(-1,1;1/2)\equiv_{p} 0\;. \label{C6}
\end{align}

\section{More preliminary results}

\noindent
In the next section we will need the following results.

\begin{lemma} 
Let $p>2$ be a prime, then we have
\begin{equation}\label{B1}
\sum_{k=1}^{p-1}\frac{1}{k2^k}\sum_{j=1}^{k-1} \frac{2^j}{j}\equiv_{p} 0,
\end{equation}
and
\begin{equation}\label{B2}
\sum_{k=1}^{p-1}\frac{2^k}{k}\sum_{j=1}^{k-1} \frac{1}{j2^j}\equiv_{p}
-2q^2_p(2).
\end{equation}
\end{lemma}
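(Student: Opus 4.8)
The plan is to read both sums as truncated double harmonic sums and to exploit the harmonic (stuffle) product together with the reduction of a double sum to a single weighted sum of the $H_i$'s. Writing $a_k=2^k/k$ and $b_k=1/(k2^k)$, the left-hand side of \eqref{B1} is $\sum_{1\le j<k\le p-1}a_jb_k$ and that of \eqref{B2} is $\sum_{1\le j<k\le p-1}b_ja_k$. I would first record the product relation
\[
\Big(\sum_{k=1}^{p-1}a_k\Big)\Big(\sum_{k=1}^{p-1}b_k\Big)=\sum_{1\le j<k\le p-1}a_jb_k+\sum_{1\le j<k\le p-1}b_ja_k+\sum_{k=1}^{p-1}a_kb_k,
\]
that is, $H_{p-1}(-1;2)\,H_{p-1}(-1;1/2)=(\text{LHS of }\eqref{B1})+(\text{LHS of }\eqref{B2})+H_{p-1}(2)$, since $a_kb_k=1/k^2$. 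By \eqref{C4}, \eqref{C44} and \eqref{C2} the product on the left is $\equiv_p(-2q_p(2))\,q_p(2)-0=-2q_p(2)^2$, so the two sums add up to $-2q_p(2)^2$ modulo $p$. Hence it suffices to evaluate one of them, say \eqref{B2}, and then obtain \eqref{B1} by subtraction.

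To evaluate \eqref{B2}, I would substitute $k=j+i$ with $i\ge1$ and use $\tfrac1{j(j+i)}=\tfrac1i\big(\tfrac1j-\tfrac1{j+i}\big)$, giving
\[
\sum_{1\le j<k\le p-1}\frac{2^{k-j}}{jk}=\sum_{i=1}^{p-2}\frac{2^i}{i}\sum_{j=1}^{p-1-i}\Big(\frac1j-\frac1{j+i}\Big)=\sum_{i=1}^{p-2}\frac{2^i}{i}\big(H_{p-1-i}+H_i-H_{p-1}\big).
\]
Reducing the inner bracket modulo $p$ via $H_{p-1}\equiv_{p^2}0$ from \eqref{C1} and $H_{p-1-i}\equiv_p H_i$ (a consequence of $\tfrac1{p-m}\equiv_p-\tfrac1m$), it collapses to $2H_i$; after restoring the harmless term $i=p-1$ this yields $(\text{LHS of }\eqref{B2})\equiv_p 2\sum_{i=1}^{p-1}\frac{2^iH_i}{i}$. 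Expanding $H_i=\sum_{l\le i}1/l$ and separating $l<i$ from $l=i$ identifies the result with $2\big(H_{p-1}(1,-1;2)+H_{p-1}(-2;2)\big)$, which by \eqref{C5} and \eqref{C10} equals $2\big(0-q_p(2)^2\big)=-2q_p(2)^2$ modulo $p$. Combined with the first paragraph, subtraction gives $(\text{LHS of }\eqref{B1})\equiv_p(-2q_p(2)^2)-(-2q_p(2)^2)=0$.

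The algebraic rearrangements here are routine; the delicate points I expect are the mod-$p$ reduction of the inner sum and, above all, matching the emerging single and double sums with the catalogued congruences \eqref{C1}--\eqref{C10} (which forces the standing assumption $p>3$, the invoked evaluations being valid only there). A fully \emph{per-sum} variant that avoids the product relation is also available: the same change of variables turns \eqref{B1} into $2\sum_i H_i/(i2^i)=2\big(H_{p-1}(1,-1;1/2)+H_{p-1}(-2;1/2)\big)$, and this vanishes modulo $p$ because the stuffle identity $H_{p-1}\,H_{p-1}(-1;1/2)=H_{p-1}(1,-1;1/2)+H_{p-1}(-1,1;1/2)+H_{p-1}(-2;1/2)$ is $\equiv_p0$ by \eqref{C1}, while $H_{p-1}(-1,1;1/2)\equiv_p0$ by \eqref{C6}. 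I anticipate that the main source of error in either route will be the bookkeeping of \emph{which} of the two mixed-base orderings occurs at each step, and hence which listed congruence is the applicable one.
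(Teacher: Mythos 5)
Your proposal is correct, but it runs the paper's argument in the reverse order. The paper first proves \eqref{B1} \emph{directly}: it reindexes the inner sum by $j\mapsto k-j$, combines terms into $\sum_{j<k}\frac{2^{-j}}{k(k-j)}$, applies partial fractions, and uses the reflection $k\mapsto p-k$ to identify the outcome as $-2H_{p-1}(-1,1;1/2)$, which vanishes by \eqref{C6}; it then deduces \eqref{B2} from exactly the stuffle relation you record, together with \eqref{C2}, \eqref{C4}, \eqref{C44}. You do the opposite: you evaluate \eqref{B2} directly --- via the substitution $k=j+i$, partial fractions, the reduction $H_{p-1-i}\equiv_{p}H_i$, and the identification of the result with $2\left(H_{p-1}(1,-1;2)+H_{p-1}(-2;2)\right)\equiv_{p}-2q_p^2(2)$ using \eqref{C5} and \eqref{C10} --- and then recover \eqref{B1} by subtraction inside the product relation. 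The two routes are symmetric and both rigorous; they simply consume different entries of the congruence catalogue (the paper's direct step needs only \eqref{C6}, yours needs \eqref{C5} and \eqref{C10}, with \eqref{C6} relegated to your alternative). Your ``per-sum'' variant of \eqref{B1}, which expands $H_{p-1}\cdot H_{p-1}(-1;1/2)$ by stuffle and quotes \eqref{C1} and \eqref{C6}, is in substance the paper's own computation repackaged as a product identity rather than a reindexing argument, and it is a cleaner way to see why \eqref{C6} is the decisive input.

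One further point in your favor: your caveat that the catalogued evaluations force $p>3$ is not pedantry, and the defect it flags belongs to the paper, not to you. At $p=3$ the left-hand side of \eqref{B1} equals $\tfrac14\equiv 1\pmod 3$, so \eqref{B1} actually fails there (while \eqref{B2} happens to hold, both sides being $\equiv 1\pmod 3$); thus the lemma's stated hypothesis $p>2$ is too generous, and both your proof and the paper's are valid exactly for $p>3$.
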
 
\begin{proof}
\noindent By \eqref{C6},
\begin{align*}
\sum_{k=1}^{p-1}\frac{1}{k2^k}\sum_{j=1}^{k-1} \frac{2^j}{j}
&\equiv_{p} \sum_{k=1}^{p-1}\frac{2^{-k}}{k}\sum_{j=1}^{k-1} \frac{2^{k-j}}{k-j}
=\sum_{k=1}^{p-1}\sum_{j=1}^{k-1} \frac{2^{-j}}{k(k-j)}\\
&=\sum_{j=1}^{p-2}\frac{2^{-j}}{j}\sum_{k=j+1}^{p-1}\left(\frac{1}{k-j}-\frac{1}{k}\right)
=\sum_{j=1}^{p-2}\frac{2^{-j}}{j}\sum_{k=1}^{p-j-1}\frac{1}{k}-H_{p-1}(-1,1;1/2)\\
&=\sum_{j=1}^{p-2}\frac{2^{-j}}{j}\sum_{k=j+1}^{p-1}\frac{1}{p-k}-H_{p-1}(-1,1;1/2)
\equiv_{p}-2H_{p-1}(-1,1;1/2)\equiv_{p}0.
\end{align*}
As regards \eqref{B2}, use \eqref{C2}, \eqref{C4}, \eqref{C44}, and \eqref{B1}. It follows from 
$$H_{p-1}(-1;2)\cdot H_{p-1}(-1;1/2)=H_{p-1}(2)+
\sum_{k=1}^{p-1}\frac{1}{k2^k}\sum_{j=1}^{k-1} \frac{2^j}{j}
+\sum_{k=1}^{p-1}\frac{2^k}{k}\sum_{j=1}^{k-1} \frac{1}{j2^j}.$$
\end{proof}

\begin{lemma} 
Let $p>2$ be a prime, then 
\begin{equation}\label{C7}
\sum_{k=2}^{p-1}\frac{1}{k^2}\sum_{m=k}^{p-1} \frac{(-1)^{m-k}}{\binom{m}{k}}\equiv_{p}
-2q_p(2).
\end{equation}
\end{lemma}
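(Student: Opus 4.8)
The plan is to reduce the double sum to the two single sums already evaluated in \eqref{C4} and \eqref{B2}, by first obtaining a closed form, modulo $p$, for the inner sum $S_k:=\sum_{m=k}^{p-1}(-1)^{m-k}/\binom{m}{k}$. The starting point is the telescoping identity $\frac{1}{\binom{m}{k}}=\frac{k}{k-1}\bigl(\frac{1}{\binom{m-1}{k-1}}-\frac{1}{\binom{m}{k-1}}\bigr)$, valid for $2\le k\le m$, which I would verify by clearing denominators. Multiplying by $(-1)^{m-k}$ and writing $g_m=\frac{k}{k-1}\frac{(-1)^{m-k}}{\binom{m-1}{k-1}}$ turns it into $\frac{(-1)^{m-k}}{\binom{m}{k}}=g_m+g_{m+1}$. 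Summing this over $k\le m\le p-1$ leaves the boundary terms $g_k+g_p$ together with the interior contribution $2\sum_{m=k+1}^{p-1}g_m$, and, after an index shift, this interior sum is a truncated copy of $S_{k-1}$.

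Next I would pass to residues modulo $p$. Since $g_k=\frac{k}{k-1}$ exactly and, by $\binom{p-1}{j}\equiv_p(-1)^{j}$, also $g_p\equiv_p\frac{k}{k-1}$, the boundary contributes $g_k+g_p\equiv_p\frac{2k}{k-1}$. Reinstating the two missing endpoints (each $\equiv_p 1$) in the shifted sum gives $2\sum_{m=k+1}^{p-1}g_m\equiv_p\frac{2k}{k-1}(S_{k-1}-2)$, so altogether $S_k\equiv_p\frac{2k}{k-1}(S_{k-1}-1)$ for $k\ge2$. Dividing by the summation factor $k2^k$ (the homogeneous solution of $P_k=\frac{2k}{k-1}P_{k-1}$) linearizes the recursion, and telescoping yields $S_k\equiv_p k2^k\bigl(\tfrac12 S_1-\sum_{i=1}^{k-1}\frac{1}{i2^i}\bigr)$. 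For the base case, $S_1=\sum_{m=1}^{p-1}(-1)^{m-1}/m=H_{p-1}-H_{(p-1)/2}$, so \eqref{C1} together with Glaisher's congruence $H_{(p-1)/2}\equiv_p-2q_p(2)$ gives $S_1\equiv_p 2q_p(2)$, i.e. $\tfrac12 S_1\equiv_p q_p(2)$.

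Substituting the closed form into the target and using $S_k/k^2=(2^k/k)\bigl(\tfrac12 S_1-\sum_{i<k}1/(i2^i)\bigr)$, I would split the sum as $q_p(2)\sum_{k=2}^{p-1}\frac{2^k}{k}-\sum_{k=1}^{p-1}\frac{2^k}{k}\sum_{i=1}^{k-1}\frac{1}{i2^i}$, the $k=1$ term of the second sum being empty. The first piece is handled by \eqref{C4}, which gives $\sum_{k=1}^{p-1}2^k/k\equiv_p-2q_p(2)$ and hence $\sum_{k=2}^{p-1}2^k/k\equiv_p-2q_p(2)-2$; the second piece is exactly the left-hand side of \eqref{B2}, equal to $-2q_p^2(2)$. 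Combining, the double sum is $\equiv_p q_p(2)\bigl(-2q_p(2)-2\bigr)-\bigl(-2q_p^2(2)\bigr)=-2q_p(2)$, as claimed.

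The main obstacle is the careful bookkeeping in the derivation of the recursion: one must track the endpoint reciprocal binomials $\binom{p-1}{k-1}$ and confirm via $\binom{p-1}{j}\equiv_p(-1)^j$ that they collapse to constants, and correctly account for the two missing endpoints when the shifted interior sum is identified with $S_{k-1}$. A secondary point is that \eqref{B2} (and the congruences feeding it) require $p>3$, so the case $p=3$ should be checked directly. Everything else is routine, as the two resulting single sums are precisely those supplied in \eqref{C4} and \eqref{B2}.
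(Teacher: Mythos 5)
Your proposal is correct, and it reaches the congruence by a genuinely different route than the paper. The paper never finds a closed form for the inner sum $S_k=\sum_{m=k}^{p-1}(-1)^{m-k}/\binom{m}{k}$: instead it reflects the outer index $k\mapsto p-k$, uses $(-1)^m\binom{p-k+m}{m}\equiv_p\binom{k-1}{m}$ to convert the inner alternating sum into $\sum_{m=0}^{k-1}1/\binom{k-1}{m}$, and then cites Gould's identity $\sum_{m=0}^{k-1}1/\binom{k-1}{m}=\frac{k}{2^k}\sum_{j=1}^{k}2^j/j$, after which the double sum is dispatched by \eqref{B1}, \eqref{C2} and \eqref{C4}. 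You instead derive, via the telescoping identity $\frac{1}{\binom{m}{k}}=\frac{k}{k-1}\bigl(\frac{1}{\binom{m-1}{k-1}}-\frac{1}{\binom{m}{k-1}}\bigr)$, the mod-$p$ recursion $S_k\equiv_p\frac{2k}{k-1}(S_{k-1}-1)$, solve it with the summation factor $k2^k$, and obtain $S_k\equiv_p k2^k\bigl(q_p(2)-\sum_{i=1}^{k-1}\frac{1}{i2^i}\bigr)$; the final reduction then uses \eqref{C4} and \eqref{B2} rather than \eqref{C2} and \eqref{B1}. (I verified your closed form numerically at $p=5$ and your final algebra: $q_p(2)(-2q_p(2)-2)+2q_p^2(2)=-2q_p(2)$.) What each approach buys: the paper's is shorter but leans on a citation to Gould's tables; yours is self-contained, in effect re-proving a mod-$p$ analogue of that identity from scratch, at the cost of the bookkeeping you rightly flag (the endpoint terms $g_k$, $g_p$ and the two reinstated endpoints, all of which you handle correctly). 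Your caveat about $p=3$ is fair but applies equally to the paper's own proof, since the preliminary congruences \eqref{C1}--\eqref{C6} are stated there for $p>3$ while the lemma claims $p>2$; the case $p=3$ is a one-line direct check ($\tfrac14\equiv 1\equiv -2 \pmod 3$).
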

\begin{proof} We have that
\begin{align*}
\sum_{k=2}^{p-1}\frac{1}{k^2}\sum_{m=k}^{p-1} \frac{(-1)^{m-k}}{\binom{m}{k}}
&=\sum_{k=2}^{p-1}\frac{1}{k^2}\sum_{m=0}^{p-1-k} \frac{(-1)^{m}}{\binom{k+m}{m}}
=\sum_{k=1}^{p-2}\frac{1}{(p-k)^2}\sum_{m=0}^{k-1} \frac{(-1)^{m}}{\binom{p-k+m}{m}}\\
&\equiv_{p} \sum_{k=1}^{p-2}\frac{1}{k^2}\sum_{m=0}^{k-1} \frac{1}{\binom{k-1}{m}}
=\sum_{k=1}^{p-2}\frac{1}{k2^k}\sum_{j=1}^{k} \frac{2^j}{j}\\
&=\sum_{k=1}^{p-1}\frac{1}{k2^k}\sum_{j=1}^{k-1} \frac{2^j}{j}
+H_{p-1}(2)-\frac{H_{p-1}(-1;2)}{(p-1)2^{p-1}}\\
&\equiv_{p} 0+0 -2q_p(2)=-2q_p(2)
\end{align*}
where we have used the congruences \eqref{C4}, \eqref{B1},
$$(-1)^m\binom{p-k+m}{m}=\frac{1}{m!}\prod_{j=k-m}^{k-1}(p-j)\equiv_{p}\binom{k-1}{m},$$
and the identity 
$$\sum_{m=0}^{k-1} \frac{1}{\binom{k-1}{m}}=\frac{k}{2^k}\sum_{j=1}^{k} \frac{2^j}{j},$$
for which the reader may refer to \cite[(2.4)]{G}.
\end{proof}

\begin{lemma} 
Suppose  $i$ and $j$ are non-negative integers and $p>2$ is a prime. Then, for $0< r<p$, we have
\begin{equation}\label{CC22}
\binom{(i+j)p}{r+ip}\equiv_{p^3}
\binom{i+j}{i}\binom{p}{r}j\left(1-p\left((i+j-1)H_{r-1}+\frac{i}{r}\right)\right),
\end{equation}
and
\begin{equation}\label{CC2}
\sum_{m=0}^{p-1}\binom{p-1+(i+j)p}{m+ip}\equiv_{p^3}
\binom{i+j}{i}\left(1+(i+j+1)pq_p(2)+\binom{i+j+1}{2} p^2q^2_p(2)\right).
\end{equation}
\end{lemma}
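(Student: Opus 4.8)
The plan is to treat both congruences as $p$-adic expansions in which a $p$-free binomial coefficient is split off first and the surviving product is expanded only to the order actually needed.

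For \eqref{CC22}, since $0<r<p$, I would start from the exact factorization
\[
\binom{(i+j)p}{ip+r}=\binom{(i+j)p}{ip}\cdot\frac{(ip)!\,(jp)!}{(ip+r)!\,(jp-r)!},
\]
and invoke the classical Jacobsthal congruence $\binom{(i+j)p}{ip}\equiv\binom{i+j}{i}\pmod{p^{3}}$ (only accuracy modulo $p^{2}$ is needed here, since the remaining factor carries an explicit $p$). Writing $\frac{(jp)!}{(jp-r)!}=jp\prod_{\ell=1}^{r-1}(jp-\ell)$ and $\frac{(ip+r)!}{(ip)!}=\prod_{\ell=1}^{r}(ip+\ell)$, I pull out $(-1)^{r-1}(r-1)!$ and $r!$ and expand each product as $\prod_\ell(1\pm xp/\ell)\equiv 1\pm xp\,H\pmod{p^2}$. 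Using $H_r=H_{r-1}+1/r$, this collapses to
\[
\binom{(i+j)p}{ip+r}\equiv\binom{i+j}{i}\,\frac{jp\,(-1)^{r-1}}{r}\Big(1-(i+j)p\,H_{r-1}-\tfrac{ip}{r}\Big)\pmod{p^{3}}.
\]
Expanding $\binom{p}{r}\equiv\frac{p(-1)^{r-1}}{r}(1-pH_{r-1})\pmod{p^{3}}$ then shows that the right-hand side of \eqref{CC22} has exactly this form, which finishes that part.

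For \eqref{CC2} set $n=i+j+1$. The crucial step is the \emph{exact} identity obtained by peeling off the factors that are multiples of $p$ from $\binom{np-1}{ip+m}=\prod_{\ell=1}^{ip+m}\frac{np-\ell}{\ell}$ (where $0\le m\le p-1$): the $i$ multiples of $p$ contribute exactly $\prod_{t=1}^{i}\frac{(n-t)p}{tp}=\binom{i+j}{i}$, while the remaining factors produce a global sign $(-1)^m$, giving
\[
\binom{np-1}{ip+m}=\binom{i+j}{i}\,(-1)^{m}\prod_{\substack{1\le \ell\le ip+m\\ p\nmid\ell}}\Big(1-\frac{np}{\ell}\Big).
\]
Because $\binom{i+j}{i}$ now factors out with no compensating $1/p$, I can expand the residual product to third order, $\prod(1-np/\ell)\equiv 1-np\,\sigma_1(m)+n^2p^2\,\sigma_2(m)\pmod{p^3}$, where $\sigma_1,\sigma_2$ are the first two elementary symmetric functions of the $1/\ell$ over the non-multiples $\ell\le ip+m$. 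Summing against $(-1)^m$ and using $\sum_{m=0}^{p-1}(-1)^m=1$ reduces \eqref{CC2} to evaluating $\sum_m(-1)^m\sigma_1(m)$ modulo $p^2$ and $\sum_m(-1)^m\sigma_2(m)$ modulo $p$.

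The first of these is easy: interchanging summation, the complete blocks cancel by $H_{p-1}\equiv0$ and $H_{p-1}(2)\equiv0$ (\eqref{C1}, \eqref{C2}), leaving $\sum_{u=1}^{(p-1)/2}\frac{1}{ip+2u}\equiv\frac12 H_{(p-1)/2}\equiv -q_p(2)+\tfrac p2 q^2_p(2)\pmod{p^2}$ by the standard Fermat-quotient evaluation of $H_{(p-1)/2}$ (the $i$-dependent term drops because $H_{(p-1)/2}(2)\equiv0\pmod p$). This already yields the term $pq_p(2)$ with its correct coefficient $n$ together with half of the $p^2$ term. The main obstacle is the second-order sum $\sum_m(-1)^m\sigma_2(m)\pmod p$, which must be shown to equal $\tfrac12 q^2_p(2)$, so that combined with the leftover from $\sigma_1$ the full $p^2$ coefficient becomes $\binom{n}{2}q^2_p(2)$. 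Writing $\sigma_2=\tfrac12\big(\sigma_1^2-\sum 1/\ell^2\big)$ and again reversing the order of summation, this is precisely the alternating double harmonic sum that Lemmas \eqref{B1} and \eqref{B2} are designed to evaluate in terms of $q^2_p(2)$; verifying that the $i$-dependence cancels and that everything reassembles into $\binom{i+j}{i}\big(1+npq_p(2)+\binom n2 p^2q^2_p(2)\big)$ is the most delicate bookkeeping. (The small prime $p=3$, where some harmonic congruences are unavailable, is checked directly.)
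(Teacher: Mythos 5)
Your treatment of \eqref{CC22} is essentially the paper's own proof: factor out $\binom{(i+j)p}{ip}$, replace it by $\binom{i+j}{i}$ via Wolstenholme--Jacobsthal (mod $p^2$ accuracy sufficing because of the explicit factor $jp$), expand the leftover ratio of products to first order in $p$, and compare with the expansion of $\binom{p}{r}$; this part is correct and matches the paper step for step.

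For \eqref{CC2} you take a genuinely different route. The paper convolves via Vandermonde, $\binom{p-1+(i+j)p}{m+ip}=\sum_{l}\binom{(i+j)p}{m-l+ip}\binom{p-1}{l}$, feeds in \eqref{CC22}, and is left to evaluate $\sum_{l}\binom{p-1}{l}\sum_{r}\binom{p}{r}H_{r-1}$ and $\sum_{l}\binom{p-1}{l}\sum_{r}\binom{p}{r}\tfrac1r$ by \eqref{C2}, \eqref{C3}, \eqref{C55}, \eqref{C66}, plus the identity $\sum_{l}\binom{p-1}{l}\sum_{r}\binom{p}{r}=2^{p-1}(2^{p-1}-1)$. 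Your exact factorization $\binom{np-1}{ip+m}=\binom{i+j}{i}(-1)^m\prod_{p\nmid\ell,\,\ell\le ip+m}\left(1-np/\ell\right)$, $n=i+j+1$, is correct (there are exactly $i$ multiples of $p$ in the range, contributing $\binom{i+j}{i}$, and the sign is $(-1)^{i(p-1)+m}=(-1)^m$), it extracts $\binom{i+j}{i}$ with no error term, needs neither \eqref{CC22} nor Wolstenholme, and makes the $n$-dependence transparent. I verified your bookkeeping: $\Sigma_1:=\sum_m(-1)^m\sigma_1(m)\equiv -q_p(2)+\tfrac p2q_p^2(2)\pmod{p^2}$ (complete blocks die by \eqref{C1} and \eqref{C2}, the half-block gives $\tfrac12H_{(p-1)/2}$, and the $i$-dependent correction dies by $H_{(p-1)/2}(2)\equiv_p0$), and then $1-np\Sigma_1+n^2p^2\Sigma_2$ reassembles into the right-hand side of \eqref{CC2} exactly when $\Sigma_2:=\sum_m(-1)^m\sigma_2(m)\equiv\tfrac12q_p^2(2)\pmod p$, as you state. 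The cost of your route is one stronger external input: Lehmer's mod $p^2$ congruence $H_{(p-1)/2}\equiv-2q_p(2)+pq_p^2(2)$, which the paper never uses (it only ever needs the mod $p$ version), though it is in the cited reference \cite{L}.

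Two corrections are needed. First, your attribution of the evaluation of $\Sigma_2$ to \eqref{B1}/\eqref{B2} is off: those are $2$-power--weighted sums, not alternating ones, and they do not give what you need. The correct derivation is short and uses the paper's own list: modulo $p$ one has $\sigma_2(m)\equiv H_m(1,1)$, since every $i$-dependent piece carries a factor $H_{p-1}$ or $H_{p-1}(1,1)$, which vanish by \eqref{C1} and \eqref{C3}; then, reversing the order of summation and splitting by parity,
\begin{align*}
\Sigma_2\equiv_{p}\sum_{m=0}^{p-1}(-1)^mH_m(1,1)
=\sum_{\substack{k=2\\ k\ \text{even}}}^{p-1}\frac{H_{k-1}}{k}
=\frac12\Bigl(H_{p-1}(1,1)+H_{p-1}(1,-1;-1)\Bigr)
\equiv_{p}\frac12\,q_p^2(2)
\end{align*}
by \eqref{C3} and \eqref{C55}. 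Second, the congruences \eqref{C1}--\eqref{C6} (and $H_{(p-1)/2}(2)\equiv_p0$) hold only for $p>3$, so the case $p=3$ does require separate treatment; note, however, that a finite ``direct check'' is not quite enough as stated, since $i$ and $j$ are unbounded --- but this caveat afflicts the paper's own proof equally, as it is written for $p>2$ while silently invoking $p>3$ facts.
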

\begin{proof} We have that
\begin{align*}
\binom{(i+j)p}{r+ip}&=
\binom{(i+j)p}{ip}\frac{jp}{ip+r}
\prod_{k=1}^{r-1}\frac{jp-k}{ip+k}\\
&\equiv_{p^3}\binom{i+j}{i}\frac{jp(-1)^{j-1}}{ip+r}
\prod_{k=1}^{r-1}\frac{1-\frac{jp}{k}}{1+\frac{ip}{k}}\\
&\equiv_{p^3}\binom{i+j}{i}\frac{jp(-1)^{j-1}}{r}\left(1-\frac{ip}{r}\right)
\left(1-jpH_{r-1}\right)\left(1-ipH_{r-1}\right)\\
&\equiv_{p^3}\binom{i+j}{i}\frac{jp(-1)^{r-1}}{r}
\left(1-p\left((i+j)H_{r-1}+\frac{i}{r}\right)\right).
\end{align*}
Congruence \eqref{CC22} follows as soon as we note that
$$\binom{p}{r}\equiv_{p^3}\frac{p(-1)^{r-1}}{r}
\left(1-pH_{r-1}\right).$$
As regards \eqref{CC2},
\begin{align*}
\sum_{m=0}^{p-1}\binom{p-1+(i+j)p}{m+ip}&=
\sum_{m=0}^{p-1}\sum_{l=0}^{p-1}\binom{(i+j)p}{m-l+ip}\binom{p-1}{l}\\
&=\binom{(i+j)p}{ip}2^{p-1}+\sum_{l=1}^{p-1}\binom{p-1}{l}
\sum_{r=1}^{l}\left(\binom{(i+j)p}{r+ip}+\binom{(i+j)p}{r+jp}\right)\\
&\equiv_{p^3} \binom{i+j}{i}2^{p-1}
+\binom{i+j}{i}\sum_{l=1}^{p-1}\binom{p-1}{l}\sum_{r=1}^{l}\binom{p}{r}\\
&\qquad\qquad\qquad
\cdot \left((i+j)-p(i+j)(i+j-1)H_{r-1}-\frac{2pij}{r}\right)
\end{align*}
where in the last step we applied \eqref{CC22}.
By \eqref{C3} and \eqref{C55}, we get
\begin{align*}
\sum_{l=1}^{p-1}\binom{p-1}{l}\sum_{r=1}^{l}\binom{p}{r}H_{r-1}
&\equiv_{p^2}-p\sum_{r=1}^{p-1}\frac{(-1)^rH_{r-1}}{r}\sum_{r=l}^{p-1}(-1)^l\\
&=-\frac{p}{2}\left(H_{p-1}(1,-1;-1)+H_{p-1}(1,1)\right)
\equiv_{p^2} -\frac{pq_p^2(2)}{2}.
\end{align*}
In a similar vain, \eqref{C2} and \eqref{C66} imply
$$\sum_{l=1}^{p-1}\binom{p-1}{l}\sum_{r=1}^{l}\binom{p}{r}\frac{1}{r}
\equiv_{p^2}-\frac{p}{2}\left(H_{p-1}(-2;-1)+H_{p-1}(2)\right)
\equiv_{p^2} 0.$$
Moreover, we have the identity
$$\sum_{l=1}^{p-1}\binom{p-1}{l}\sum_{r=1}^{l}\binom{p}{r}=2^{p-1}(2^{p-1}-1).$$
Finally, we obtain
$$\sum_{m=0}^{p-1}\binom{p-1+(i+j)p}{m+ip}
\equiv_{p^3}\binom{i+j}{i}\left(
2^{p-1}+(i+j)2^{p-1}(2^{p-1}-1)+p^2\binom{i+j}{2}q_p^2(2)
\right)$$
which is equivalent to \eqref{CC2}.
\end{proof}

\begin{lemma} Let $i, j$ be non-negative integers and $p>2$ a prime. Then, for $0\leq m\leq k<p$, we have
\begin{equation}\label{CC1}
\binom{k+(i+j)p}{m+ip}\equiv_{p^2}
\binom{i+j}{i}\binom{k}{m}(1+p((i+j)H_k-jH_{k-m}-iH_m)),
\end{equation}
and
\begin{equation}\label{CC11}
\sum_{m=0}^k\binom{k+(i+j)p}{m+ip}\equiv_{p^2}
2^k\binom{i+j}{i}\left(
1+p(i+j)\sum_{m=1}^{k}\frac{1}{m2^m}\right).
\end{equation}
\end{lemma}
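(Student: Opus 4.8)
The plan is to derive \eqref{CC1} from the single evaluation $\binom{(i+j)p}{ip}\equiv_{p^3}\binom{i+j}{i}$ (already used in the proof of the previous lemma) together with a first-order $p$-adic expansion of the remaining factors, and then to obtain \eqref{CC11} by summing \eqref{CC1} against $\binom{k}{m}$ and invoking the identity \eqref{H1}.

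For \eqref{CC1}, first I would record the quotient
\begin{equation*}
\frac{\binom{k+(i+j)p}{m+ip}}{\binom{(i+j)p}{ip}}
=\frac{\prod_{l=1}^{k}\bigl(l+(i+j)p\bigr)}{\prod_{l=1}^{m}(l+ip)\,\prod_{l=1}^{k-m}(l+jp)}.
\end{equation*}
Since $1\le l\le k<p$, every factor $l$ is invertible modulo $p^2$, so each product splits as a factorial times a product of terms of the form $1+\tfrac{cp}{l}$ with $c\in\{i,j,i+j\}$; expanding to first order in $p$ gives
\begin{align*}
\prod_{l=1}^{k}\bigl(l+(i+j)p\bigr)&\equiv_{p^2} k!\bigl(1+(i+j)pH_k\bigr),\\
\prod_{l=1}^{m}(l+ip)&\equiv_{p^2} m!\bigl(1+ipH_m\bigr),\\
\prod_{l=1}^{k-m}(l+jp)&\equiv_{p^2}(k-m)!\bigl(1+jpH_{k-m}\bigr).
\end{align*}
Dividing these three congruences, using $(1+px)^{-1}\equiv_{p^2}1-px$, collapses the factorials into $\binom{k}{m}$ and the correction terms into $1+p\bigl((i+j)H_k-iH_m-jH_{k-m}\bigr)$; multiplying back by $\binom{(i+j)p}{ip}\equiv_{p^3}\binom{i+j}{i}$ then yields \eqref{CC1}.

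For \eqref{CC11}, I would sum \eqref{CC1} over $0\le m\le k$. The constant term contributes $2^k$, the factor $p(i+j)H_k$ contributes $p(i+j)2^kH_k$, and the two remaining weighted sums $\sum_m\binom{k}{m}H_m$ and $\sum_m\binom{k}{m}H_{k-m}$ are equal by the substitution $m\mapsto k-m$, each equal to $2^k\bigl(H_k-\sum_{m=1}^k\tfrac{1}{m2^m}\bigr)$ by \eqref{H1}. Hence these two together contribute $-p(i+j)2^k\bigl(H_k-\sum_{m=1}^k\tfrac1{m2^m}\bigr)$. The $2^kH_k$ terms cancel, leaving exactly $2^k\bigl(1+p(i+j)\sum_{m=1}^k\tfrac1{m2^m}\bigr)$, which is \eqref{CC11}.

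The argument is essentially bookkeeping, and I expect no genuine obstacle. The only points demanding care are the uniform handling of the $p$-adic expansions and the inversion of the denominators modulo $p^2$; the step most prone to a sign or index slip is tracking which of $i$, $j$, $i+j$ multiplies each of $H_m$, $H_{k-m}$, $H_k$. The symmetry $i\leftrightarrow j$, $m\leftrightarrow k-m$ and the cancellation in the passage to \eqref{CC11} both serve as convenient consistency checks.
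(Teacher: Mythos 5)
Your proposal is correct, but your route to \eqref{CC1} is genuinely different from the paper's. You factor $\binom{k+(i+j)p}{m+ip}$ as $\binom{(i+j)p}{ip}$ times the quotient $\prod_{l=1}^{k}(l+(i+j)p)\big/\bigl(\prod_{l=1}^{m}(l+ip)\prod_{l=1}^{k-m}(l+jp)\bigr)$ and expand each product to first order in $p$, which is valid since every $l$ in range is a unit modulo $p^2$. The paper instead applies the Vandermonde--Chu convolution $\binom{k+(i+j)p}{m+ip}=\sum_{l}\binom{(i+j)p}{m-l+ip}\binom{k}{l}$, evaluates the off-diagonal terms $\binom{(i+j)p}{l+ip}$ and $\binom{(i+j)p}{l+jp}$ via the preceding lemma (congruence \eqref{CC22}) taken modulo $p^2$, reassembles the resulting sums into $\binom{p+k}{m}$ and $\binom{p+k}{k-m}$, and only then performs a product expansion. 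Your argument is more self-contained: it bypasses \eqref{CC22} entirely and needs only $\binom{(i+j)p}{ip}\equiv\binom{i+j}{i}$, which you quote modulo $p^3$ (Wolstenholme/Ljunggren, strictly valid for $p\ge 5$), but since \eqref{CC1} is asserted only modulo $p^2$, the modulo-$p^2$ version, valid for every prime, suffices, so nothing is lost at $p=3$; the paper's route, by contrast, keeps all intermediate quantities integral and makes the two lemmas of Section 3 a linked chain. For \eqref{CC11} your argument --- summing \eqref{CC1} over $m$, merging the $\sum_m\binom{k}{m}H_m$ and $\sum_m\binom{k}{m}H_{k-m}$ sums by the symmetry $m\mapsto k-m$, and applying \eqref{H1} so that the $2^kH_k$ terms cancel --- is exactly the paper's.
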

\begin{proof} We begin with
\begin{align*}
\binom{k+(i+j)p}{m+ip}&=
\sum_{l=0}^{k}\binom{(i+j)p}{m-l+ip}\binom{k}{l}\\
&=\binom{i+j}{j}\binom{k}{m}+
\sum_{l=1}^{m}\binom{(i+j)p}{l+ip}\binom{k}{m-l}
+\sum_{l=1}^{k-m}\binom{(i+j)p}{l+jp}\binom{k}{m+l}.
\end{align*}
Then we apply \eqref{CC22} modulo $p^2$ so that
\begin{align*}
\binom{k+(i+j)p}{m+ip}&\equiv_{p^2}
\binom{i+j}{j}\left[\binom{k}{m}+
j\sum_{l=1}^{m}\binom{p}{l}\binom{k}{m-l}
+i\sum_{l=1}^{k-m}\binom{p}{l}\binom{k}{k-m-l}\right]\\
&=\binom{i+j}{j}\left[(1-j-i)\binom{k}{m}+
j\binom{p+k}{m}+i\binom{p+k}{k-m}\right]\\
&\equiv_{p^2}
\binom{i+j}{i}\binom{k}{m}(1+p((i+j)H_k-jH_{k-m}-iH_m)).
\end{align*}
From \eqref{CC1}, by summing over $m$ we obtain
$$\sum_{m=0}^k\binom{k+(i+j)p}{m+ip}\equiv_{p^2}
\binom{i+j}{i}\left(
2^k+p(i+j)\left(2^k H_k-\sum_{m=0}^k\binom{k}{m}H_m\right)
\right).$$
Congruence \eqref{CC11} follows after applying the identity from \eqref{H1}.
\end{proof}

\section{Proof of Theorem \ref{Main1}}

\noindent Let LHS and RHS be the left-hand side and the right-hand side of \eqref{SS}.
We have that
\begin{align*}
\mbox{LHS}&=
\sum_{m_1=0}^{rp-1}\sum_{m_2=0}^{sp-1} \binom{m_1+m_2}{m_1}
\sum_{m_3=0}^{tp-1}\binom{m_1+m_2+m_3}{m_3}\\
&=tp\sum_{m_1=0}^{rp-1}\sum_{m_2=0}^{sp-1} \binom{m_1+m_2}{m_1}
\binom{m_1+m_2+tp}{m_1+m_2}\frac{1}{m_1+m_2+1}\\
&=tp\sum_{m=0}^{rp-1}\sum_{k=m}^{m+sp-1} \binom{k}{m}
\binom{k+tp}{k}\frac{1}{k+1}\\
&=tp\sum_{i=0}^{r-1}\sum_{j=0}^{s-1}\sum_{m=0}^{p-1}\sum_{k=m}^{m+p-1} 
\binom{k+(i+j)p}{m+ip}\binom{k+(i+j+t)p}{k+(i+j)p}\frac{1}{k+(i+j)p+1}\\
&=\sum_{i=0}^{r-1}\sum_{j=0}^{s-1} (A_{ij}+B_{ij}+C_{ij})
\end{align*}
where
\begin{align*}
A_{ij}&:=tp\sum_{m=0}^{p-1}\sum_{k=m}^{p-1} 
\binom{k+(i+j)p}{m+ip}\binom{k+(i+j+t)p}{k+(i+j)p}\frac{1}{k+(i+j)p+1},\\
B_{ij}&:=\frac{tp}{(i+j+1)p+1}\binom{(i+j+t+1)p}{(i+j+1)p}
\sum_{m=1}^{p-1}\binom{(i+j+1)p}{m+ip},\\
C_{ij}&:=tp\sum_{m=0}^{p-1}\sum_{k=1}^{m-1} \binom{k+(i+j+1)p}{m+ip}
\binom{k+(i+j+t+1)p}{k+(i+j+1)p}\frac{1}{k+1+(i+j+1)p}.
\end{align*}
In a similar way
\begin{align*}
\mbox{RHS}
&=t\sum_{i=0}^{r-1}\sum_{j=0}^{s-1}
\binom{i+j}{i}\binom{i+j+t}{i+j}\frac{1}{i+j+1}.
\end{align*}
Wolstenholme's theorem $\binom{ap}{bp}\equiv_{p^3} \binom{a}{b}$ and
$\binom{n_1p+n_0}{k_1p+k_0}\equiv_{p} \binom{n_1}{k_1}\binom{n_0}{k_0}$ deliver
\begin{align*}
B_{ij}&\equiv_{p^3}\frac{t(i+j+1)p^2}{(i+j+1)p+1}\binom{i+j+t+1}{i+j+1}
\sum_{m=1}^{p-1}\binom{p-1+(i+j)p}{m-1+ip}\frac{1}{m+ip}\\
&\equiv_{p^3}tp^2(i+j+1)\binom{i+j+t+1}{i+j+1}\binom{i+j}{i}
\sum_{m=1}^{p-1}\binom{p-1}{m-1}\frac{1}{m}\\
&\equiv_{p^3}2tp^2(i+j+t+1)\binom{i+j+t}{i,j,t}q_p(2),
\end{align*}
because of the fact that
$$\sum_{m=1}^{p-1}\binom{p-1}{m-1}\frac{1}{m}=
\frac{1}{p}\sum_{m=1}^{p-1}\binom{p}{m}=\frac{2^p-2}{p}=2q_p(2).$$
Finally, we note that for $k<m<p$:
$$\binom{k+p}{m}
=\frac{1}{m!}\prod_{j=1}^{k}(j+p)\cdot p\cdot 
\prod_{j=1}^{m-(k+1)}(p-j)\equiv_{p^2} \frac{p(-1)^{m-(k+1)}}{(k+1)\binom{m}{k+1}},$$
and
\begin{align*}
\binom{k+(i+j+1)p}{m+ip}
&=\frac{(k+p+(i+j)p)\cdots(p+(i+j)p)}{(m+ip)\cdots(1+ip)}
\binom{k+p-m+(i+j)p}{ip}\\
&\equiv_{p^2}(i+j+1)\binom{k+p}{m}\binom{k-m+p+(i+j)p}{ip}\\
&\equiv_{p^2}(i+j+1)\frac{p(-1)^{m-(k+1)}}{(k+1)\binom{m}{k+1}}\binom{i+j}{i}.
\end{align*}
Furthermore,
$$\binom{k+(i+j+t+1)p}{k+(i+j+1)p}\equiv_{p} \binom{i+j+t+1}{i+j+1},$$ 
and thus we obtain
\begin{align*}
C_{ij}&\equiv_{p^3} tp^2(i+j+1)\binom{i+j+t+1}{i+j+1}\binom{i+j}{i}
\sum_{m=0}^{p-1}\sum_{k=1}^{m-1}\frac{(-1)^{m-(k+1)}}{(k+1)^2\binom{m}{k+1}}\\
&\equiv_{p^3}tp^2(i+j+t+1)\binom{i+j+t}{i,j,t}\sum_{k=1}^{p-2}\frac{1}{(k+1)^2}\sum_{m=k+1}^{p-1} \frac{(-1)^{m-(k+1)}}{\binom{m}{k+1}}\\
&\equiv_{p^3}tp^2(i+j+t+1)\binom{i+j+t}{i,j,t}\sum_{k=2}^{p-1}\frac{1}{k^2}\sum_{m=k}^{p-1} \frac{(-1)^{m-k}}{\binom{m}{k}}\\
&\equiv_{p^3} -2tp^2(i+j+t+1)\binom{i+j+t}{i,j,t}q_p(2)
\end{align*}
where in the last step we used \eqref{C7}.
Therefore $B_{ij}+C_{ij}\equiv_{p^3} 0$ and it suffices to show that
\begin{equation}\label{ff}
A_{ij}\equiv_{p^3}\frac{t}{i+j+1}\binom{i+j}{i}\binom{i+j+t}{i+j}.
\end{equation}
Now, by \eqref{CC2},
\begin{align*}
p\sum_{k=p-1}^{p-1} \sum_{m=0}^{k}\cdots
&=\frac{1}{i+j+t+1}\binom{(i+j+t+1)p}{(i+j+1)p}\sum_{m=0}^{k}\binom{p-1+(i+j)p}{m+ip}\\
&\equiv_{p^3}
\frac{1}{i+j+1}\binom{i+j+t}{i+j}\binom{i+j}{i}2^{(p-1)(i+j+1)}\\
&\equiv_{p^3}
\binom{i+j+t}{i,j,t}\left(\frac{1}{i+j+1}+pq_p(2)+\frac{(i+j)}{2}\,p^2q^2_p(2)\right)
\end{align*}
because
$$\frac{(2^{p-1})^{(i+j+1)}}{i+j+1}=\frac{(1+pq_p(2))^{i+j+1}}{i+j+1}
\equiv_{p^3} \frac{1}{i+j+1}+pq_p(2)+\frac{(i+j)}{2}\,p^2q^2_p(2).$$
Moreover by \eqref{CC1} and \eqref{CC11},
\begin{align*}
p\sum_{k=0}^{p-2} \sum_{m=0}^{k}\cdots
&\equiv_{p^3}
p\sum_{k=0}^{p-2}\left(\frac{1}{k+1}-\frac{p(i+j)}{(k+1)^2}\right)
\binom{i+j+t}{i+j}(1+p(i+j)H_k)\\
   &\qquad \cdot2^k\binom{i+j}{i}\left(
1+p(i+j)\sum_{m=1}^{k}\frac{1}{m2^m}\right)\\
&\equiv_{p^3}
\frac{p}{2}
\binom{i+j+t}{i,j,t}\Bigg(H_{p-1}(-1;2)-p(i+j)H_{p-1}(-2;2)\\
&\qquad\left. +p(i+j)H_{p-1}(1,-1;2) 
+p(i+j)\sum_{k=0}^{p-2}\frac{1}{k+1}\sum_{m=1}^{k}\frac{1}{m2^m}\right)\\
&\equiv_{p^3}
\binom{i+j+t}{i,j,t}\left(-pq_p(2)-\frac{(i+j)}{2}p^2q_p^2(2)\right),
\end{align*}   
where in the last step we used \eqref{C4}, \eqref{C10}, \eqref{C5}, and \eqref{B2}.
Hence
\begin{align*}
A_{ij}=tp\sum_{k=0}^{p-1} \sum_{m=0}^{k}\cdots
&\equiv_{p^3} \frac{t}{i+j+1}\binom{i+j+t}{i,j,t},
\end{align*}   
and the proof of \eqref{ff} follows. Therefore, Theorem \ref{Main1} has been validated.

\section{Supercongruence for sum of abelian squares}

\noindent
In this section, we consider a supercongruence for a triple sum of squared multinomial terms, which one can regard as a close companion to Theorem \ref{Main1}, and these appear in the following context (see \cite{R} for details). An \emph{abelian square} is a nonempty string of length $2n$ where the last $n$ symbols form a permutation of the first $n$ symbols. If $f_k(n)$ denotes the number of abelian squares of length $2n$ over an alphabet with $k$ letters, we have \cite{R}
$$f_k(n)=\sum_{n_1+\cdots+n_k=n}\binom{n_1+n_2+\cdots+n_k}{n_1,n_2,\dots,n_k}^2.$$

\noindent
We are ready to state and prove our second main result, namely the congruence that we promised for the abelian squares which is very much in the spirit of Section 4.

\begin{remark}
We remark that the congruence \eqref{News1} implies that the number of abelian squares over an alphabet with $3$ letters whose lengths are positive and less than $2p$, is divisible by $p^2$ (see entry $A174123$ in OEIS, Online Encyclopedia of Integer Sequences).
\end{remark}

\begin{theorem} 
Let $p>2$ be a prime, and let $r$, $s$, $t$ be any positive integers. Then
\begin{equation}\label{TT}
\sum_{m_1=0}^{rp-1}\sum_{m_2=0}^{sp-1}\sum_{m_3=0}^{tp-1}
\binom{m_1+m_2+m_3}{m_1,m_2,m_3}^2\equiv_{p^2} 
\sum_{m_1=0}^{r-1}\sum_{m_2=0}^{s-1}\sum_{m_3=0}^{t-1}
\binom{m_1+m_2+m_3}{m_1,m_2,m_3}^2.
\end{equation}
\end{theorem}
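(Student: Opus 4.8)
The plan is to mirror the block decomposition of Section 4, but to exploit that we now only work modulo $p^2$. Write each index as $m_1=ip+b_1$, $m_2=jp+b_2$, $m_3=hp+b_3$ with $0\le i\le r-1$, $0\le j\le s-1$, $0\le h\le t-1$ and $0\le b_\ell\le p-1$. The observation special to the squared sum is that Kummer's theorem controls which terms matter: one has $p\mid\binom{m_1+m_2+m_3}{m_1,m_2,m_3}$ as soon as there is a carry in the base-$p$ addition of $m_1,m_2,m_3$, so the \emph{square} of such a term is $\equiv 0\pmod{p^2}$. In particular every residue triple with $b_1+b_2+b_3\ge p$ drops out, and the left-hand side collapses modulo $p^2$ to a sum over the quotient triples $(i,j,h)$ of the block contributions
\[
\Sigma_{ijh}:=\sum_{\substack{b_1,b_2,b_3\ge0\\ b_1+b_2+b_3\le p-1}}\binom{(i+j+h)p+b_1+b_2+b_3}{ip+b_1,\,jp+b_2,\,hp+b_3}^2.
\]
It then suffices to prove the single-block congruence $\Sigma_{ijh}\equiv\binom{i+j+h}{i,j,h}^2\pmod{p^2}$ for every $(i,j,h)$ and to sum over the box, which reproduces the right-hand side of \eqref{TT}.

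To evaluate $\Sigma_{ijh}$ I would expand the multinomial by a Wolstenholme--Lucas argument of exactly the type behind \eqref{CC1}. Factoring $\binom{N}{M_1,M_2,M_3}=\binom{N}{M_1}\binom{M_2+M_3}{M_2}$ and applying the mod-$p^2$ expansion twice gives, on the no-carry range,
\[
\binom{(i+j+h)p+B}{ip+b_1,jp+b_2,hp+b_3}\equiv\binom{i+j+h}{i,j,h}\binom{B}{b_1,b_2,b_3}\bigl(1+p\,\Lambda_{ijh}(b_1,b_2,b_3)\bigr)\pmod{p^2},
\]
where $B=b_1+b_2+b_3$ and $\Lambda_{ijh}$ is a linear combination of harmonic numbers of the residues with coefficients in $i,j,h$. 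Squaring and summing splits $\Sigma_{ijh}$ into a leading term $\binom{i+j+h}{i,j,h}^2\sum_{B=0}^{p-1}f_3(B)$ and a first-order correction $2p\binom{i+j+h}{i,j,h}^2\sum_{b_1+b_2+b_3\le p-1}\binom{B}{b_1,b_2,b_3}^2\Lambda_{ijh}$.

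Two facts then finish the argument. First, the leading term requires the core identity
\[
\sum_{B=0}^{p-1}f_3(B)=\sum_{b_1+b_2+b_3\le p-1}\binom{b_1+b_2+b_3}{b_1,b_2,b_3}^2\equiv 1\pmod{p^2},
\]
which is precisely the $r=s=t=1$ instance of \eqref{TT} and the source of the abelian-square divisibility recorded in the Remark. Second, the correction must vanish, i.e.\ $\sum_{b_1+b_2+b_3\le p-1}\binom{B}{b_1,b_2,b_3}^2\Lambda_{ijh}\equiv 0\pmod p$; this I expect to follow from the evident symmetry in $b_1,b_2,b_3$ together with the harmonic-sum congruences \eqref{C1}--\eqref{C3}, in the same manner as the reductions of Section 4. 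The main obstacle is the core identity for $\sum_{B=0}^{p-1}f_3(B)$: unlike the linear case there is no hockey-stick collapse of a summation index, so I would attack it either by the constant-term method of Apagodu--Zeilberger, writing $f_3(B)=\mathrm{CT}_{x,y}\bigl[(1+x+y)^B(1+x^{-1}+y^{-1})^B\bigr]$ and summing the resulting geometric series in $B$ modulo $p^2$, or by reducing through the single-variable form $f_3(B)=\sum_{a}\binom{B}{a}^2\binom{2(B-a)}{B-a}$ to sums accessible via \eqref{H2} and the congruences of Section 2.
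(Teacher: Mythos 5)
Your reduction scheme coincides with the paper's own: the block decomposition, the observation that a base-$p$ carry forces $p\mid\binom{m_1+m_2+m_3}{m_1,m_2,m_3}$ so that squared terms with $b_1+b_2+b_3\ge p$ vanish modulo $p^2$, the expansion \eqref{CC1} on the carry-free range, and the split into a leading congruence (your ``core identity'', which is \eqref{News1}) plus a first-order correction (which, after the symmetrization you invoke, is exactly \eqref{News2}). The gap is that you prove neither of these, and they are the entire technical content of the theorem. For \eqref{News1} you correctly arrive at the single-variable form $\sum_{B<p}\sum_{b}\binom{B}{b}^2\binom{2b}{b}$, but ``accessible via \eqref{H2} and the congruences of Section 2'' is not a proof and understates what is required: the paper must isolate the critical term $b=p'=(p-1)/2$, kill the remaining terms $S_1$ by pairing $b$ with $p-1-b$ together with \eqref{H2} and Wolstenholme-type facts, and evaluate the critical term $S_2$ via Z.-H. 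Sun's mod-$p^2$ expansion of $\binom{p'+a}{p'}$, the identities \eqref{T1}, \eqref{T2}, and the WZ-certified identity \eqref{T4} (proved in Section 2 expressly for this purpose), plus Lehmer's congruences $H_{p'}\equiv_p-2q_p(2)$ and $H_{\lfloor p'/2\rfloor}\equiv_p-3q_p(2)$. Your alternative constant-term route (summing a geometric series in $B$ modulo $p^2$) is likewise only named, not executed; note that Section 6 shows the constant-term method reaching mod $p^2$ only for the \emph{unsquared} triple sum, so there is no evidence it handles the squared sum cheaply.

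The deferral of the correction term hides a second gap. Symmetry in $b_1,b_2,b_3$ does reduce $\Lambda_{ijh}$ to the single combination $H_B-H_{b_1}$, i.e., to \eqref{News2}; but \eqref{News2} does not follow from \eqref{C1}--\eqref{C3}, because there the harmonic numbers are weighted by squared multinomial coefficients rather than appearing as bare multiple harmonic sums. The paper's mechanism is different and essential: apply Vandermonde--Chu in two ways to rewrite the two weighted sums as $\sum_b\binom{2b}{b}\sum_{a}\binom{a}{b}^2H_a$ and $\sum_b\binom{2b}{b}\sum_a\binom{a}{b}^2H_{a-b}$, discard $b>p'$ since $p\mid\binom{2b}{b}$ there, and then prove $\sum_{a=b}^{p-1}\binom{a}{b}^2(H_a-H_{a-b})\equiv_p0$ for $b\le p'$ by observing that $\binom{a}{b}^2(H_a-H_{a-b})$ is a polynomial in $a$ of degree $2b-1\le p-2$ with zero constant term, so the sum over $a$ dies by the power-sum congruence $\sum_{a=1}^{p-1}a^i\equiv_p0$ for $(p-1)\nmid i$. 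As it stands, your argument establishes only the reduction step, not the theorem.
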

\begin{proof} We have that the LHS is
$$
\sum_{i_1=0}^{r-1}\sum_{i_2=0}^{s-1}\sum_{i_3=0}^{t-1}
\sum_{m_1=0}^{p-1}\sum_{m_2=0}^{p-1}\sum_{m_3=0}^{p-1}
\binom{m_1+m_2+m_3+(i_1+i_2+i_3)p}{m_3+i_3 p}^2
\binom{m_1+m_2+(i_1+i_2)p}{m_2+i_2 p}^2.
$$
If $m_1+m_2\geq p$ then $0\leq m_1+m_2-p<m_2<p$ and $p$ divides $\binom{m_1+m_2+(i_1+i_2)p}{m_2+i_2 p}$. Hence modulo $p^2$ we can assume that $m_1+m_2<p$. Using a similar argument for the other binomial coefficient, we can assume that $m_1+m_2+m_3<p$. With these assumptions, by using \eqref{CC1}, we obtain
that the LHS is congruent modulo $p^2$ to
\begin{align*}
&\sum_{i_1=0}^{r-1}\sum_{i_2=0}^{s-1}\sum_{i_3=0}^{t-1}
\binom{i_1+i_2+i_3}{i_1,i_2,i_3}^2\sum_{m_1+m_2+m_3<p}
\binom{m_1+m_2+m_3}{m_1,m_2,m_3}^2\\
&\qquad\cdot\left(1+2p((i_1+i_2+i_3)H_{m_1+m_2+m_3}
-i_1H_{m_1}-i_2H_{m_2}-i_3H_{m_3})\right).
\end{align*} 
So, it suffices to show that (this is the case $r=s=t=1$!)
\begin{align} \label{News1} 
\sum_{m_1+m_2+m_3<p}\binom{m_1+m_2+m_3}{m_1,m_2,m_3}^2\equiv_{p^2}1 \end{align}
and 
\begin{align} \label{News2}
\sum_{m_1+m_2+m_3<p}\binom{m_1+m_2+m_3}{m_1,m_2,m_3}^2
\left(H_{m_1+m_2+m_3}-H_{m_1}\right)\equiv_{p}0.
\end{align}
Before proving (\ref{News1}), we opt to rewrite the LHS and apply Vandermonde-Chu's identity followed by 
interchanging the order of summations so that
\begin{align*} \mbox{LHS}
=\sum_{a=0}^{p-1}\sum_{b=0}^a\sum_{c=0}^b\binom{b}c^2\binom{a}b^2
=\sum_{a=0}^{p-1}\sum_{b=0}^a\binom{2b}b\binom{a}b^2
=\sum_{b=0}^{p-1}\binom{2b}b\sum_{a=b}^{p-1}\binom{a}b^2. 
\end{align*}
Denote $p'=\frac{p-1}2$. Next, we break this up into two separate claims:
$$S_1:=\sum_{\substack{b=0 \\ b\neq p'}}^{p-1}\binom{2b}b\sum_{a=b}^{p-1}\binom{a}b^2\equiv_{p^2}0 \qquad
\text{and} \qquad S_2:=\binom{p-1}{p'}\sum_{a=p'}^{p-1}\binom{a}{p'}^2\equiv_{p^2}1.$$
As regards the first one,
\begin{align*}
S_1&=\sum_{b=0}^{p'-1}\binom{2b}b\sum_{a=b}^{p-1}\binom{a}b^2
+\sum_{b=p'+1}^{p-1}\binom{2b}b\sum_{a=b}^{p-1}\binom{a}b^2\\
&=
\sum_{b=0}^{p'-1}\left(\binom{2b}b\sum_{a=b}^{p-1}\binom{a}b^2
+\binom{2(p-1-b)}{p-1-b}\sum_{a=0}^{b}\binom{p-1-a}{b-a}^2\right)\\
&\equiv_{p^2}
\sum_{b=0}^{p'-1}\left(\binom{2b}{b}\sum_{a=b}^{p-1}\binom{a}b^2
+\binom{2(p-1-b)}{p-1-b}\binom{2b}{b}\right)\\
\end{align*}
because $p$ divides $\binom{2(p-1-b)}{p-1-b}$ for $b=0,\dots,p'-1$ and
$\binom{p-1-a}{b-a}\equiv_{p}(-1)^{b-a}\binom{b}{b-a}$.

\noindent Hence, it suffices to show that, for $b=0,\dots,p'-1$,
$$\sum_{a=b}^{p-1}\binom{a}{b}^2+\binom{2(p-1-b)}{p-1-b}\equiv_{p^2}0.$$
Replacing $c=p-1-b$, this claim tantamount: for $c=p'+1,\dots,p-1$
$$\sum_{a=0}^c\binom{p-1-a}{c-a}^2+\binom{2c}{c}\equiv_{p^2}0.$$
Since $p$ divides $\binom{2c}c$ for $c=p'+1,\dots,p-1$ and 
$$\binom{p-1-a}{c-a}\equiv_{p^2}(-1)^{c-a}\binom{c}{c-a}(1-p(H_c-H_a)),$$
it follows that
\begin{align*}
\sum_{a=0}^c\binom{p-1-a}{c-a}^2+\binom{2c}c
&\equiv_{p^2}
\sum_{a=0}^c\binom{c}a^2(1-2p(H_c-H_a))+\binom{2c}{c}\\
&=2\binom{2c}c(1-pH_c)+2p\sum_{a=0}^c\binom{c}a^2H_a\\
&=2\binom{2c}c\left(1-pH_c+p(2H_c-H_{2c})\right)\\
&\equiv_{p^2}2\binom{2c}c(1-pH_{2c})\equiv_{p^2} 2\binom{2c}c(1-1)=0,
\end{align*}
where we used the identity from \eqref{H2}, $p\binom{2c}cH_c\equiv_{p^2}0$ and $\binom{2c}cpH_{2c}\equiv_{p^2}1$.

\smallskip
\noindent
\noindent As regards $S_2$, we have that (see \cite[Lemma 2.5]{ZHS}), for $a=0,\dots,p'$,
$$\binom{p'+a}{p'}\equiv_{p^2} (-1)^a\binom{p'}{a}
\left(1+p\left(2H_{2a}-H_{a}\right)\right).$$
Hence
$$\binom{p-1}{p'}=\binom{p'+p'}{p'}\equiv_{p^2}
(-1)^{p'}\left(1+p\left(2H_{p-1}-H_{p'}\right)\right)
\equiv_{p^2}(-1)^{p'}(1-pH_{p'}),$$
and
\begin{align*}
S_2&=\binom{p-1}{p'}\sum_{a=0}^{p'}\binom{p'+a}{p'}^2\\
&\equiv_{p^2}(-1)^{p'}\sum_{a=0}^{p'}(-1)^a\binom{p'}{a}\binom{p'+a}{a}
\left(1+p(2H_{2a}-H_a-H_{p'})\right)
\end{align*}
After using identity \eqref{T1}, it suffices to show that 
\begin{equation}\label{CH1}
\sum_{a=1}^{p'}(-1)^{p'-a}\binom{p'}{a}\binom{p'+a}{a}(2H_{2a}-H_a-H_{p'})\equiv_{p}0.
\end{equation}
At this point, invoke the identities \eqref{T1}, \eqref{T2} and \eqref{T4}. Then, \eqref{CH1} becomes
$$6H_{p'}-2H_{\lfloor p'/2\rfloor} -2H_{p'}-H_{p'}=
3H_{p'}-2H_{\lfloor p'/2\rfloor}\equiv_{p}0$$
which holds because it is known (see \cite[congruences (41)-(44)]{L}) that $H_{p'}\equiv_{p} -2q_p(2)$ and $H_{\lfloor p'/2\rfloor}\equiv_{p} -3q_p(2)$.

\smallskip
\noindent
We now turn to the congruence (\ref{News2}), which can be reformulated as follows
\begin{align} \label{News2.1}
\sum_{a=0}^{p-1}\sum_{b=0}^a\sum_{c=0}^b\binom{b}c^2\binom{a}b^2H_a\equiv_p
\sum_{a=0}^{p-1}\sum_{b=0}^a\sum_{c=0}^b\binom{b}c^2\binom{a}b^2H_c. \end{align}
Applying the Vandermonde-Chu identity and swapping the order of summations, the left-hand side of (\ref{News2.1}) equals to
$$\text{LHS}=\sum_{a=0}^{p-1}\sum_{b=0}^a\binom{2b}b\binom{a}b^2H_a
=\sum_{b=0}^{p-1}\binom{2b}b\sum_{a=b}^{p-1}\binom{a}b^2H_a.$$
An analogous procedure allows to express the right-hand side of (\ref{News2.1}) as
$$\text{RHS}=\sum_{b=0}^{p-1}\binom{2b}b\sum_{a=b}^{p-1}\binom{a}b^2H_{a-b}.$$
Therefore, our task boils down to proving
$$\sum_{b=0}^{p-1}\binom{2b}b\sum_{a=b}^{p-1}\binom{a}b^2H_a\equiv_p
\sum_{b=0}^{p-1}\binom{2b}b\sum_{a=b}^{p-1}\binom{a}b^2H_{a-b}.$$
In fact, since $p$ divides $\binom{2b}b$ for $p'<b<p$, it suffices to show, for $0\leq b\leq p'$, that
\begin{align} \label{News2.2}
\sum_{a=b}^{p-1}\binom{a}b^2(H_a-H_{a-b})\equiv_p 0.\end{align}
The case $b=0$ is trivial. Note that if $p-1$ does not divide $i$ then $\sum_{a=1}^{p-1}a^i\equiv_p0$ (see, for instance \cite[equation (16)]{L}); Or, more directly, since $\sum_{a=1}^{p-1}a\equiv_p0$ and because the map $a\mapsto a^i$ is an isomorphism of the group $\mathbb{F}_p^*$, we have $\sum_{a=1}^{p-1}a^i\equiv_p0$. On this basis and starting the summation (\emph{harmlessly}) at $a=1$, we obtain 
\begin{align*} \sum_{a=b}^{p'}\binom{a}b^2(H_a-H_{a-b})
&=\sum_{a=b}^{p-1}\binom{a}b^2\sum_{j=0}^{b-1}\frac{1}{a-j}
=\sum_{a=1}^{p-1}\binom{a}b^2\sum_{j=0,j\not=a}^{b-1}\frac{1}{a-j}
\\
&
=\frac1{b!^2}\sum_{a=1}^{p-1}\sum_{i=1}^{2b-1}\alpha_i\,a^i
=\sum_{i=1}^{2b-1}\frac{\alpha_i}{b!^2}\sum_{a=1}^{p-1}a^i\equiv_p 0,
\end{align*}
for some integers $\alpha_i$. Since $1\leq i\leq 2p'-1=p-2$, the proof follows.
\end{proof}

\section{Some further remarks}

\noindent
Let us consider the  {\sl double-sum} counterparts of those sums from the last two sections.
First of all, it is evident that
$$\sum_{m_1=0}^{rp-1}\sum_{m_2=0}^{sp-1} \binom{m_1+m_2}{m_1}=\binom{(s+r)p}{rp}-1
\equiv_{p^3}\binom{s+r}{r}-1=\sum_{m_1=0}^{r-1}\sum_{m_2=0}^{s-1}
\binom{m_1+m_2}{m_1}.$$
Moreover, the Supercongruence 5' in \cite{AZ} says that
$$\sum_{m_1=0}^{rp-1}\sum_{m_2=0}^{sp-1}
\binom{m_1+m_2}{m_1}^2\equiv_{p^2}
\leg{p}{3}\sum_{m_1=0}^{r-1}\sum_{m_2=0}^{s-1}\binom{m_1+m_2}{m_1}^2.$$
As a matter of fact, as before, the LHS is
\begin{align*}
&\sum_{i_1=0}^{r-1}\sum_{i_2=0}^{s-1}
\binom{i_1+i_2}{i_1}^2\sum_{m_1+m_2<p}
\binom{m_1+m_2}{m_1}^2\cdot\left(1+2p((i_1+i_2)H_{m_1+m_2}-i_1H_{m_1}-i_2H_{m_2})\right).
\end{align*} 
Now, by Vandermonde's convolution,  
\begin{align*} 
\sum_{m_1+m_2<p}\binom{m_1+m_2}{m_1}^2\equiv_{p^2}
\sum_{k=0}^{p-1}\sum_{m_1=0}^{k} \binom{k}{m_1}^2=
\sum_{k=0}^{p-1}\binom{2k}{k}\equiv_{p^2} \leg{p}{3}
 \end{align*}
where in the last step we used \cite[congruence (1.9)]{ST}. Here, $\leg{p}{3}$ is the Legendre symbol.
So, it suffices to show that
\begin{align} 
\sum_{m_1+m_2<p}\binom{m_1+m_2}{m_1}^2
\left(H_{m_1+m_2}-H_{m_1}\right)\equiv_{p}0
\end{align}
which can be proved as before.
By \cite[Proposition 7']{AZ}, if $p>3$ is a prime 
and $r_1, \dots, r_n$ are positive integers, then
$$
\sum_{m_1=0}^{r_1p-1} \dots \sum_{m_n=0}^{r_np-1} 
\binom{m_1+\dots +m_n}{m_1, \dots , m_n}  
\equiv_p  
\sum_{m_1=0}^{r_1-1} \dots \sum_{m_n=0}^{r_n-1}  
\binom{m_1+\dots + m_n}{m_1, \dots , m_n} .
$$
What can be said of 
$$\sum_{m_1=0}^{p-1} \dots \sum_{m_n=0}^{p-1} 
\binom{m_1+\dots +m_n}{m_1, \dots , m_n}^2  
\equiv_p  \sum_{m_1+\dots+m_n<p}\binom{m_1+\dots +m_n}{m_1, \dots , m_n}^2
\equiv_p?$$

\smallskip
\noindent
Apagodu and Zeilberger \cite{AZ} succeeded to prove Theorem \ref{Main1}, by the \emph{constant term} method, modulo $p$, although our method yields modulo $p^3$. Below, an intermediate congruence (modulo $p^2$) is established by allowing the two techniques to work in tandem. For notational simplicity, adopt the cyclic notation $\prod_{\substack{cyc}}U(x,y,z)=U(x,y,z)U(z,x,y)U(y,z,x)$. Leaving out the initial steps, for which the reader is referred to \cite{AZ}, we commence with:
\begin{align*}
\sum_{m_1,m_2,m_3=0}^{p-1}\binom{m_1+m_2+m_3}{m_1,m_2,m_3}
&=CT\frac1{(xyz)^{p-1}}\prod_{cyc} \frac{(x+y+z)^p-x^p}{y+z} \\
&=CT\frac1{(xyz)^{p-1}}\prod_{cyc}\left((y+z)^{p-1}+\sum_{i=1}^{p-1}\binom{p}ix^{p-1}(y+z)^{i-1}\right).
\end{align*}
We proceed with this equation by identifying two separate contributors. For the first piece, use $\binom{p-1}j\equiv_{p^2}(-1)^j\left[1-pH_j\right]$ and $H_{p'}\equiv_p-2q_p(2)$ (see \cite[(41)]{L}) so that
\begin{align*} CT\frac1{(xyz)^{p-1}}\prod_{cyc}(y+z)^{p-1}
&=CT\frac1{(xyz)^{p-1}}\prod_{cyc}\sum_{i=0}^{p-1}\binom{p-1}iy^iz^{p-1-i}=\sum_{j=0}^{p-1}\binom{p-1}j^3 \\
&\equiv_{p^2}\sum_{j=0}^{p-1}(-1)^j\left[1-3pH_j\right] 
=1-\frac{3p}2H_{p'}\equiv_{p^2}1+3p\,q_p(2). \end{align*}
For the second part, apply $\binom{p}k\equiv_{p^2}(-1)^{k-1}\frac{p}k, \,\binom{p-1}j\equiv_p(-1)^j$ and \eqref{C4} to obtain
\begin{align*} 
&CT\frac1{(xyz)^{p-1}}
\prod_{cyc}\left((y+z)^{p-1}(x+z)^{p-1}\sum_{k=1}^{p-1}\binom{p}kz^{p-k}(x+y)^{k-1}\right) \\
=&CT\frac1{(xyz)^{p-1}}\prod_{cyc}
\left(\sum_{i,j=0}^{p-1}\binom{p-1}iy^{p-1-i}z^i\binom{p-1}jx^{p-1-j}z^j
\sum_{k=1}^{p-1}\binom{p}kz^{p-k}(x+y)^{k-1}\right) \\
=&3\sum_{\substack{i,j=0 \\  i+j<p-1}}^{p-1}\binom{p-1}i\binom{p-1}j\binom{p}{i+j+1}\binom{i+j}i \\
\equiv&_{p^2}3p\sum_{\substack{i,j=0 \\  i+j<p-1}}^{p-1}\frac1{i+j+1}\binom{i+j}i 
=3p\sum_{k=0}^{p-2}\frac1{k+1}\sum_{j=0}^k\binom{k}j \\
=&3p\sum_{k=0}^{p-2}\frac{2^k}{k+1}\equiv_{p^2}-3p\,q_p(2). \end{align*}
Combining the two pieces, we arrive at $\sum_{m_1,m_2,m_3=0}^{p-1}\binom{m_1+m_2+m_3}{m_1,m_2,m_3}\equiv_{p^2}1$.

\medskip

\end{document}